\newcommand{\bbR}{\mathbb{R}}
\newcommand{\calB}{\mathcal{B}}
\DeclareMathOperator{\id}{id} 
\DeclareMathOperator{\one}{\mathbbm{1}} 
\DeclareMathOperator{\re}{Re} 
\newcommand{\argument}{\mathord{\,\cdot\,}} 
\newcommand{\dx}{\;\mathrm{d}} 
\newcommand{\norm}[1]{\left\lVert #1 \right\rVert} 
\DeclareMathOperator{\dom}{dom} 
\newcommand{\tp}{{\operatorname{T}}}
\newcommand{\spec}{\sigma} 
\newcommand{\spb}{s} 
\newcommand{\rightProof}{``$\Rightarrow$'' }
\newcommand{\leftProof}{``$\Leftarrow$'' }
\theoremstyle{definition}
\newtheorem{definition}{Definition}
\newtheorem{remark}[definition]{Remark}
\newtheorem{example}[definition]{Example}
\newtheorem{examples}[definition]{Examples}
\newtheorem*{open_problem}{Open Problem}
\theoremstyle{plain}
\newtheorem{theorem}[definition]{Theorem}
\author[$\star$]{Jochen Glück}
\title{Evolution equations with eventually positive solutions}
\affil[$\star$]{Universität Passau, Fakultät für Informatik und Mathematik, 94032 Passau, Germany, jochen.glueck@uni-passau.de}
\begin{document}

\maketitle

\begin{abstract}
	We discuss linear autonomous evolution equations on function spaces which have the property that a positive initial value leads to a solution which initially changes sign, but then becomes -- and stays -- positive again for sufficiently large times. 
	This eventual positivity phenomenon has recently been discovered for various classes of differential equations, but so far a general theory to explain this type of behaviour exists only under additional spectral assumptions.
\end{abstract}

\section{Evolution equations and positivity} 
\label{section:positive}

To set the stage, we start with a reminder about linear evolutions equations whose solutions are positive whenever the initial value is.

\subsection*{Linear ODEs and positivity}

For a matrix $A \in \bbR^{d \times d}$ the linear and autonomous initial value problem
\begin{align*}
	\begin{cases}
		\dot u(t) = Au(t) \quad & \text{for } t \in [0,\infty), \\
		     u(0) = u_0,
	\end{cases}
\end{align*}
where $u_0 \in \bbR^d$, is well-known to be solved by the function $u: [0,\infty) \ni t \mapsto e^{tA}u_0 \in \bbR^d$.
We say that the matrix family $(e^{tA})_{t \in [0,\infty)}$ is \emph{positive} if $e^{tA}u_0 \ge 0$ for all $t \in [0,\infty)$ whenever $u_0 \ge 0$; 
equivalently, $e^{tA} \ge 0$ for all $t \in [0,\infty)$.
Here, we use the notation $\ge 0$ for a vector or a matrix to say that all its entries are $\ge 0$.

\begin{remark}
	There is some terminological inconsistency in the literature with respect to this notion: 
	in matrix analysis and in some parts of PDE theory it is common to use the word \emph{non-negativity}; 
	we use the notion \emph{positivity} instead, which is more common in functional analysis.
\end{remark}

To get an intuition for this concept, it is useful to recall that positivity of the matrix exponential function is easy to characterize in terms of $A$:

\begin{theorem}
	\label{thm:pos-matrix-sg}
	For $A \in \bbR^{d \times d}$ the family $(e^{tA})_{t \in [0,\infty)}$ is positive if and only if every off-diagonal entry of $A$ is $\ge 0$.
\end{theorem}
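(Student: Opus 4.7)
The plan is to prove both directions directly from the series representation and the derivative of $e^{tA}$ at $t = 0$.

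For the implication ``off-diagonal entries $\ge 0$ implies $(e^{tA})_{t\ge 0}$ positive'', I would use a shift trick. Choose a real number $c \ge 0$ large enough that the matrix $B := A + cI$ has all of its entries (not just the off-diagonal ones) $\ge 0$; this is possible because shifting the diagonal by $c$ only changes the diagonal entries. Since $-cI$ and $B$ commute, $e^{tA} = e^{-tc}\, e^{tB}$. Now $e^{tB} = \sum_{k=0}^\infty \frac{t^k B^k}{k!}$, and because products and sums of entrywise nonnegative matrices are entrywise nonnegative, each partial sum has entries $\ge 0$; hence so does the limit $e^{tB}$ for every $t \ge 0$. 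Multiplying by the positive scalar $e^{-tc}$ preserves this, so $e^{tA} \ge 0$.

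For the converse, I would differentiate at $t = 0$. Since $t \mapsto e^{tA}$ is differentiable with derivative $Ae^{tA}$, one has componentwise
\begin{align*}
	A_{ij} = \lim_{t \to 0^+} \frac{(e^{tA})_{ij} - \delta_{ij}}{t}.
\end{align*}
For $i \ne j$ the subtracted term $\delta_{ij}$ vanishes, so $A_{ij} = \lim_{t \to 0^+} t^{-1} (e^{tA})_{ij}$. By hypothesis $(e^{tA})_{ij} \ge 0$ for every $t \ge 0$, and $t > 0$, so the quotient is $\ge 0$ for all small $t > 0$; passing to the limit gives $A_{ij} \ge 0$.

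Neither direction presents a serious obstacle; the only subtlety worth stating explicitly is the commutativity of $-cI$ and $B$ needed to split $e^{t(-cI + B)}$ into a product, which is immediate because scalar multiples of the identity commute with everything. I would mention this briefly so the reader sees why the splitting is legitimate, but I would not otherwise belabor the routine computations.
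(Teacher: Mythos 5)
Your proof is correct and follows the same two-step strategy as the paper's: for the necessity of nonnegative off-diagonal entries you differentiate $e^{tA}$ at $t=0$ componentwise (the paper writes the same computation using inner products with the canonical unit vectors), and for sufficiency you use the identical shift trick $e^{tA} = e^{-tc}\,e^{t(A+c\id)}$ together with the entrywise-nonnegative series expansion. The only cosmetic difference is that you note explicitly why the exponential splits into a product, which the paper leaves implicit.
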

\begin{proof}
	\rightProof 
	For indices $j \not= k$ one has
	\begin{align*}
		A_{jk} 
		= 
		\lim_{t \downarrow 0} \, \langle e_j, \frac{e^{tA} - \id}{t} e_k \rangle 
		=
		\lim_{t \downarrow 0} \frac{1}{t} \langle e_j, e^{tA} e_k \rangle
		\ge 0,
	\end{align*}
	where $e_j, e_k \in \bbR^d$ are the canonical unit vectors and $\langle \argument, \argument \rangle$ denotes the standard inner product on $\bbR^d$.
	
	\leftProof 
	By assumption one has, for a sufficiently large number $c \ge 0$, the inequality $A + c\id \ge 0$ and hence,
	\begin{align*}
		e^{tA} = e^{-tc} e^{t(A + c\id)} \ge 0
	\end{align*}
	for all $t \in [0,\infty)$, where the inequality at the end follows from the series expansion of the matrix exponential function.
\end{proof}

A typical situation where positivity of matrix exponential functions occurs is the study of Markov processes on finite state spaces:

\begin{example}
	Assume that all off-diagonal entries of $A \in \bbR^{d \times d}$ are $\ge 0$ and that all rows of $A$ sum up to $0$.
	Then $(e^{tA})_{t \in [0,\infty)}$ is positive, and the vector $\one \in \bbR^d$ whose entries are all equal to $1$ satisfies $A \one = 0$ and thus $e^{tA} \one = \one$ for all $t \in [0,\infty)$.
	This shows that each of the matrices $e^{tA}$, $t \ge 0$, is row stochastic, so $(e^{tA})_{t \in [0,\infty)}$ describes a continuous-time Markov process on the finite state space $\{1, \dots, d\}$.
\end{example}

\subsection*{Infinite dimensional equations}

In infinite dimensions, we are still interested in initial value problems of the form
\begin{align*}
	\begin{cases}
		\dot u(t) = Au(t) \quad & \text{for } t \in [0,\infty), \\
		     u(0) = u_0;
	\end{cases}
\end{align*}
but this time, $u_0$ is an element of a Banach space $E$, and $A: E \supseteq \dom(A) \to E$ is a linear operator which is defined on a vector subspace $\dom(A)$ of $E$.
The initial value problem is well-posed if and only if $A$ is a \emph{generator} of a \emph{$C_0$-semigroup} $(e^{tA})_{t \in [0,\infty)}$.
Such a $C_0$-semigroup is a family of bounded linear operators on $E$ which is a suitable infinite dimensional substitute of the matrix exponential function and has similar properties, but it is not given by an exponential series in general.
The solution $u$ to the initial value problem is then given, again, by the formula $u(t) = e^{tA} u_0$ for $t \in [0,\infty)$.
The generator and the $C_0$-semigroup determine each other uniquely, and the relation between semigroup and generator can, in general, be expressed by the formula
\begin{align*}
	\dom(A) & = \{v \in E: \, \lim_{t \downarrow 0} \frac{1}{t}(e^{tA}-\id)v \text{ exists in } E\}, \\ 
	  Av    & = \lim_{t \downarrow 0} \frac{1}{t}(e^{tA}-\id)v.
\end{align*}
The following notion will be used several times later on. 
For a linear operator $A: E \supseteq \dom(A) \to X$ on a Banach space $X$ the quantity
\begin{align*}
	\spb(A) := \sup \big\{ \re \lambda: \; \lambda \in \spec(A) \big\} \in [-\infty, \infty]
\end{align*}
(where $\spec(A)$ denotes the spectrum of $A$) is called the \emph{spectral bound of $A$}.
If $A$ generates a $C_0$-semigroup, then $\spb(A) < \infty$ \cite[Theorem~II.1.10(ii)]{EngelNagel2000}.
More information about $C_0$-semigroup theory can, for instance, be found in the monographs \cite{Pazy1983, EngelNagel2000}.

Let us briefly illustrate the concept of a $C_0$-semigroup by two very classical examples:

\begin{examples}
	\label{exas:c0-sgs}
	(a) 
	Let $p \in (1,\infty)$ and let the operator $A$ be the Laplace operator on the space $L^p(\bbR^n)$, i.e.,
	\begin{align*}
		\dom(A) & = W^{2,p}(\bbR^n), \\
		  Av    & = \Delta v := \sum_{j=1}^n \partial_j^2 v
		\quad \text{for } v \in \dom(A).
	\end{align*}
	Then $A$ generates a $C_0$-semigroup $(e^{tA})_{t \in [0,\infty)}$ on $L^p(\bbR^n)$ that is given by the formula
	\begin{align*}
		(e^{tA}u_0)(x) = \frac{1}{(4\pi t)^{n/2}} \int_{\bbR^n} \exp\left( - \frac{\norm{x-y}_2^2}{4t} \right) u_0(y) \dx y 
	\end{align*}
	for $u_0 \in L^p(\bbR^n)$ and $x \in \bbR^n$.
	The semigroup is called the \emph{heat semigroup} since it describes the solutions to the heat equation
	\begin{align*}
		\dot u(t) = \Delta u(t).
	\end{align*}
	Similar observations can be made on the space $L^1(\bbR^n)$, but the domain of the Laplace operator cannot be chosen to be a Sobolev space then, due to the lack of elliptic regularity.
	
	(b) 
	Let $p \in [1,\infty)$ and let the operator $A$ be the negative first derivative on $L^p(0,\infty)$, given by
	\begin{align*}
		\dom(A) = \left\{v \in W^{1,p}(0,\infty): \; v(0) = 0 \right\}, 
		\quad 
		Av = -v'.
	\end{align*}
	Then $A$ generates the so-called \emph{right shift se\-mi\-group} $(e^{tA})_{t \in [0,\infty)}$ on $L^p(0,\infty)$ given by
	\begin{align*}
		(e^{tA} u_0)(x) = 
		\begin{cases}
			u_0(x-t) \quad & \text{if } t \le x, \\ 
			0        \quad & \text{if } t >   x
		\end{cases}
	\end{align*}
	for $u_0 \in L^p(0,\infty)$. 
	The mapping $u: [0,\infty) \ni t \mapsto e^{tA}u_0 \in L^p(0,\infty)$ is a so-called \emph{mild solution} to the transport equation
	\begin{align*}
		\begin{cases}
			\dot u(t,x) = - \partial_x u(t,x) \quad & \text{for } t,x > 0, \\ 
			     u(0,x) =   u_0(x)          \quad & \text{for } x > 0,   \\ 
			     u(t,0) =   0               \quad & \text{for } t > 0;
		\end{cases}
	\end{align*}
	see \cite[Definition~II.6.3]{EngelNagel2000} for the definition of mild solutions.
	This example is an easy illustration of the general principle that boundary conditions of a PDE are encoded in the domain of the corresponding operator $A$.
\end{examples}

\subsection*{Positive $C_0$-semigroups}

In order to discuss \emph{positive} $C_0$-semigroups, one needs an order structure on the underlying Banach space $E$.
This can, for instance, be a partial order induced by a general closed convex cone, or more specifically the order structure of a Banach lattice.
To facilitate the exposition here, we will restrict our attention to the illustrative case of function spaces though, most importantly to $L^p$-spaces (over $\sigma$-finite measures spaces).

For a function $f \in L^p$ we write $f \ge 0$ to indicate that $f(\omega) \ge 0$ for almost all $\omega$. 
In accordance with the terminology used above we call a function $f$ \emph{positive} if it satisfies $f \ge 0$.
A $C_0$-semigroup $(e^{tA})_{t \in [0,\infty)}$ on $L^p$ is called \emph{positive} if $e^{tA}u_0 \ge 0$ for all $t \in [0,\infty)$ whenever $u_0 \ge 0$. 
Equivalently, each of the operators $e^{tA}$ is positive -- which we denote by $e^{tA} \ge 0$ -- in the sense that it maps positive functions to positive functions.

We have already encountered two examples of positive $C_0$-semigroups: as is easy to see, both semigroups in Examples~\ref{exas:c0-sgs} are positive.

\section{Positivity for large times} 
\label{section:ev-pos-intro}

Let us now proceed to a more surprising situation, where positive initial values lead to solutions which might change sign at first, but again become -- and stay -- positive for sufficiently large times.
In this section we illustrate by means of two easy examples that this kind of behaviour can indeed occur;
a more systematic account is presented in the subsequent section.

\subsection*{A matrix example}

Let us start with a simple three dimensional example.

\begin{example}
	\label{exa:ev-pos-motivating-matrix}
	Consider the orthonormal basis $\calB$ of $\bbR^3$ that consists of the three vectors 
	\begin{align*}
		v_1 = 
		\frac{1}{\sqrt{3}} 
		\begin{pmatrix}
			1 \\ 1 \\ 1
		\end{pmatrix},
		\quad 
		v_2 = 
		\frac{1}{\sqrt{2}}
		\begin{pmatrix}
			-1 \\ 0 \\ 1
		\end{pmatrix},
		\quad 
		v_3 = 
		\frac{1}{\sqrt{6}}
		\begin{pmatrix}
			1 \\ -2 \\ 1
		\end{pmatrix}.
	\end{align*}
	Let $A \in \bbR^{3 \times 3}$ be such that its representation matrix with respect to the basis $\calB$ is given by
	\begin{align*}
		R \coloneqq
		\begin{bmatrix}
			0 &  0 &  0 \\ 
			0 & -1 & -1  \\
			0 &  1 & -1 
		\end{bmatrix},
	\end{align*}
	i.e., we let $A = V R V^{-1}$, where $V \in \bbR^{3 \times 3}$ consists of the columns $v_1, v_2, v_3$. A direct computation shows that $A$ has some strictly negative off-diagonal entries, so $(e^{tA})_{t \in [0,\infty)}$ is not positive according to Theorem~\ref{thm:pos-matrix-sg}. 
	On the other hand, $A$ has the eigenvalue $0$ (with eigenvector $v_1$) as well as the further eigenvalues $-1\pm i$, so $e^{tA}$ converges to the matrix $v_1 \cdot v_1^\tp$, whose entries are all equal to $1/3$, as $t \to \infty$; this shows that $e^{tA}$ is a positive matrix for all sufficiently times $t$.
\end{example}

\subsection*{A fourth order PDE}

Let us now discuss an infinite dimensional example where eventual positivity occurs.

\begin{example}
	\label{exa:fourth-order-pde-1d}
	Let us consider the biharmonic heat equation with periodic boundary conditions on $L^2(0,1)$. 
	It is given by
	\begin{align*}
		\dot u(t) = A u(t) \quad \text{for } t \in [0,\infty),
	\end{align*}
	where $A: L^2(0,1) \supseteq \dom(A) \to L^2(0,1)$ has domain
	\begin{align*}
		& \dom(A) \\
		& = \{v \in H^4(0,1): \, v^{(k)}(0) = v^{(k)}(1) \text{ for } k = 0,1,2,3\}
	\end{align*}
	and is given by $Av = -v^{(4)}$ for each $v \in \dom(A)$.
	The $C_0$-semigroup $(e^{tA})_{t \in [0,\infty)}$ is not positive; this can, for instance, be seen by associating a sesqui-linear form to $-A$ and using the so-called \emph{Beurling--Deny criterion} \cite[Corollary~2.18]{Ouhabaz2005}.
	
	However, we can prove positivity for large times. 
	To this end, note that the operator $A$ is self-adjoint, and its spectrum consists of isolated eigenvalues only since $\dom(A)$ embeds compactly into $L^2(0,1)$. 
	The largest eigenvalue of $A$ is $0$ and the constant function $\one$ spans the corresponding eigenspace. 
	Hence we conclude, for instance from the spectral theorem for self-adjoint operators with compact resolvent, that
	\begin{align*}
		e^{tA}u_0 \to \langle u_0, \one \rangle \one := \int_0^1 u_0(x) \dx x \cdot \one
		\quad \text{in } L^2(0,1)
	\end{align*}
	for each $u_0 \in L^2(0,1)$ as $t \to \infty$.
	Since $A$ is self-adjoint the operators $e^{tA}$ have, for $t > 0$, the property that they map $L^2(0,1)$ into $\dom(A)$ and thus into $L^\infty(0,1)$.
	Moreover they are even continuous from $L^2(0,1)$ to $L^\infty(0,1)$ (this follows for instance from the closed graph theorem), so for $u_0 \in L^2(0,1)$ one even has
	\begin{align*}
		e^{tA}u_0 = e^{1 \cdot A} e^{(t-1)A}u_0 \to \langle u_0, \one \rangle \, e^{1 \cdot A} \one = \langle u_0, \one \rangle \one
	\end{align*}
	as $t \to \infty$, where the convergence takes place with respect to the norm in $L^\infty(0,1)$. 
	This implies that if $u_0 \ge 0$, then $e^{tA}u_0 \ge 0$ for all sufficiently large times $t$.
\end{example}

\section{A systematic theory} 
\label{section:ev-pos-theory}

After the previous ad hoc examples we now present a few excerpts of a more systematic account to eventual positivity.

\subsection*{Eventually positive matrix semigroups}

Example~\ref{exa:ev-pos-motivating-matrix} already gives quite a straightforward idea of how to obtain a sufficient condition for a matrix exponential function to be eventually positive: 
if a matrix $A \in \bbR^{d \times d}$ has a simple real eigenvalue that dominates the real parts of all other eigenvalues and if the corresponding eigenvectors of $A$ and the transposed matrix $A^\tp$ have strictly positive entries only, then we will expect $e^{tA}$ to be positive -- and in fact to even have strictly positive entries only -- for all sufficiently large $t$. 
A bit more surprising is the Perron--Frobenius like fact that the converse implication holds, too.
This was proved by Noutsos and Tsatsomeros in \cite[Theorem~3.3]{NoutsosTsatsomeros2008}, who thus obtained the following theorem (in a slightly different form; see \cite[Theorem~6.1]{DanersGlueckKennedy2016a} for the following version of the result):

\begin{theorem}
	\label{thm:noutsos}
	For a matrix $A \in \bbR^{d \times d}$ the following assertions are equivalent:
	\begin{enumerate}[\upshape (i)]
		\item 
		There exists a time $t_0 \ge 0$ such that all entries of $e^{tA}$ are strictly positive for all $t > t_0$.
		
		\item 
		The spectral bound $\spb(A)$ is a geometrically simple eigenvalue of $A$ and strictly larger than the real part of every other eigenvalue of $A$. 
		Moreover, both $A$ and $A^\tp$ have a strictly positive eigenvector for $\spb(A)$, respectively.
	\end{enumerate}
\end{theorem}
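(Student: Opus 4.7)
For the implication (ii) $\Rightarrow$ (i), my plan is to produce a strictly positive rank-one spectral projection onto the $\spb(A)$-eigenspace. Let $v$ and $w$ denote the given strictly positive eigenvectors of $A$ and $A^\tp$ for $\spb(A)$; then $\langle w, v\rangle > 0$. I would first upgrade the geometric simplicity hypothesis to algebraic simplicity: if a generalized eigenvector $u$ satisfied $(A - \spb(A))u = v$, pairing with $w$ would yield the contradiction $0 = \langle (A^\tp - \spb(A))w, u\rangle = \langle w, v\rangle > 0$. Hence the spectral projection onto the $\spb(A)$-eigenspace equals $P = v w^\tp / \langle w, v\rangle$, whose entries are all strictly positive. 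Combined with the spectral gap between $\spb(A)$ and the remaining eigenvalues and standard Jordan-form bounds, this gives $e^{-t\spb(A)} e^{tA} \to P$ as $t \to \infty$, so $e^{tA}$ has strictly positive entries for all sufficiently large $t$.

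For the converse (i) $\Rightarrow$ (ii), my strategy is to reduce everything to the classical Perron--Frobenius theorem for matrices with strictly positive entries. For each $t > t_0$ the matrix $B_t := e^{tA}$ has strictly positive entries, so by Perron--Frobenius its spectral radius $\spr(B_t)$ is an algebraically simple eigenvalue of $B_t$ that strictly dominates every other eigenvalue of $B_t$ in modulus, and both $B_t$ and $B_t^\tp$ admit strictly positive eigenvectors for $\spr(B_t)$. The spectral mapping identity $\spec(B_t) = \{e^{t\lambda} : \lambda \in \spec(A)\}$ yields $\spr(B_t) = e^{t\spb(A)}$, while the generalized eigenspace of $B_t$ at $\spr(B_t)$ decomposes as a direct sum of the generalized eigenspaces $G_\lambda(A)$ of $A$ over all $\lambda \in \spec(A)$ with $e^{t\lambda} = e^{t\spb(A)}$.

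The main obstacle is then to rule out non-real eigenvalues of $A$ on the critical line $\re z = \spb(A)$. Suppose $\mu = \spb(A) + i\beta \in \spec(A)$ with $\beta \ne 0$; then $\modulus{e^{t\mu}} = e^{t\spb(A)} = \spr(B_t)$ for every $t > t_0$, so strict modulus-dominance of $\spr(B_t)$ forces $e^{t\mu} = \spr(B_t)$, i.e.\ $t\beta \in 2\pi\bbZ$, for \emph{every} $t$ in the uncountable interval $(t_0, \infty)$ --- which is impossible. Hence $\spb(A)$ is the unique eigenvalue of $A$ with real part $\spb(A)$, the direct sum above collapses to the single one-dimensional generalized eigenspace $G_{\spb(A)}(A) = \ker(A - \spb(A) I)$, and the strictly positive Perron eigenvectors of $B_t$ and $B_t^\tp$ automatically lie in $\ker(A - \spb(A)I)$ and $\ker(A^\tp - \spb(A)I)$ respectively, delivering all four assertions of (ii).
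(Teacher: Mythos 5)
The paper does not include its own proof of this result; it is quoted from Noutsos--Tsatsomeros and Daners--Gl\"uck--Kennedy. Your argument is correct and follows the standard route one finds in those references: for (ii)$\Rightarrow$(i) you build the strictly positive rank-one spectral projection $P = vw^{\tp}/\langle w,v\rangle$ (first upgrading geometric to algebraic simplicity via the nondegenerate pairing $\langle w,v\rangle>0$) and use the spectral gap to get $e^{-t\spb(A)}e^{tA}\to P$; for (i)$\Rightarrow$(ii) you apply the Perron--Frobenius theorem to $e^{tA}$ for $t>t_0$ together with the spectral mapping theorem, and the observation that $e^{it\beta}=\nobreak1$ cannot hold on a whole interval of $t$-values unless $\beta=0$ cleanly eliminates further eigenvalues on the critical line. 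No gaps.
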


Here, a \emph{strictly positive} vector means a vectors whose entries are all strictly positive.

\subsection*{Individual vs.\ uniform behaviour}

In infinite dimensions, there is a subtlety which we have not properly discussed, yet. 
Let $(e^{tA})_{t \in [0,\infty)}$ be a $C_0$-semigroup on a function space $E$.
If for every $0 \le u_0 \in E$ there exists a time $t_0 \ge 0$ such that $e^{tA}u_0 \ge 0$ for all $t \ge t_0$, 
it is natural to call the semigroup \emph{individually eventually positive} since $t_0$ might depend on $u_0$.
If, in addition, $t_0$ can be chosen to be independent of $u_0$, then we call the semigroup \emph{uniformly eventually positive}. 

In finite dimensions, the two concepts can be easily seen to coincide (just apply the semigroup to all canonical unit vectors), but in infinite dimensions, there exist semigroups which are individually but not uniformly eventually positive; 
see \cite[Examples~5.7 and~5.8]{DanersGlueckKennedy2016a}.

\subsection*{Conditions for eventual positivity in infinite dimensions}

The arguments given in Example~\ref{exa:fourth-order-pde-1d} show individual eventual positivity of the semigroup, and the same argument can easily be generalised to a more abstract setting. 
There is one important issue to note, though: if the leading eigenfunction is not bounded away from $0$, but might be equal to $0$ on the boundary of the underlying domain (as in the case of Dirichlet boundary conditions), then it does no longer suffice for the argument if $e^{1 \cdot A} L^2$ is contained in $L^\infty$ -- instead, one needs that every vector in $e^{1 \cdot A} L^2$ is dominated by a multiple of the leading eigenfunction. 
This property is closely related to Sobolev embedding theorems, and it can be used to give a characterisation of a certain \emph{strong} version of individual eventual positivity that is reminiscent of Theorem~\ref{thm:noutsos}. 

On the other hand, giving conditions for uniform rather than individual eventual positivity is more subtle. 
It requires a domination condition not only on the vectors in the image of $e^{1 \cdot A}L^2$, but also on the image of the dual operator. 
If the semigroup is self-adjoint, though, this dual condition becomes redundant and one ends up with the following sufficient condition for uniform eventual positivity:

\begin{theorem}
	\label{thm:uniform-self-adjoint}
	Let $(\Omega,\mu)$ be a $\sigma$-finite measure space and let $(e^{tA})_{t \in [0,\infty)}$ be a self-adjoint $C_0$-semigroup on $L^2 := L^2(\Omega,\mu)$ which leaves the set of real-valued functions invariant. Let $u \in L^2$ be a function which is strictly positive almost everywhere and assume that the following assumptions hold:
	\begin{enumerate}[\upshape (1)]
		\item 
		The spectral bound $\spb(A)$ is a simple eigenvalue of $A$ and the corresponding eigenspace contains a function $v$ that satisfies $v \ge cu$ for a number $c > 0$.
		
		\item 
		There exists a time $t_1 \ge 0$ such that the modulus of every vector in $e^{t_1 A}L^2$ is dominated by a multiple of $u$.
	\end{enumerate}
	Then $(e^{tA})_{t \in [0,\infty)}$ is uniformly eventually positive.
\end{theorem}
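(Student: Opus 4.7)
The plan is to first reduce to the case $\spb(A) = 0$ by replacing $A$ with $A - \spb(A)\id$, which only rescales $e^{tA}$ by the positive factor $e^{-t\spb(A)}$ and therefore preserves (eventual) positivity. Normalising the eigenfunction so that $\|v\|_{L^2} = 1$, let $P$ denote the orthogonal projection $Pf = \langle f, v\rangle v$ onto $\ker A = \linSpan\{v\}$; it is self-adjoint, commutes with every $e^{tA}$, and satisfies $e^{tA} P = P e^{tA} = P$. Because $A$ is self-adjoint and $0$ is a simple eigenvalue, which I shall take to be (as is tacitly intended) isolated, the functional calculus yields a spectral gap $\varepsilon > 0$ with
\begin{align*}
\|e^{tA} - P\|_{L^2 \to L^2} \le e^{-\varepsilon t} \qquad (t \ge 0).
\end{align*}

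Next, assumption~(2) is upgraded to a quantitative domination. The principal ideal $E_u := \{f \in L^2 : |f| \le Cu \text{ for some } C > 0\}$, endowed with $\|f\|_u := \inf\{C > 0 : |f| \le Cu\}$, is a Banach space continuously embedded in $L^2$, and the closed graph theorem applied to $e^{t_1 A}\colon L^2 \to E_u$ produces a constant $M > 0$ with
\begin{align*}
|e^{t_1 A} f| \le M\|f\|_{L^2}\, u \qquad (f \in L^2).
\end{align*}
Self-adjointness dualises this bound: for all $f, g \in L^2$ one has $|\langle e^{t_1 A} g, f\rangle| = |\langle g, e^{t_1 A} f\rangle| \le M\|f\|_{L^2}\|ug\|_{L^1}$, and taking the supremum over $f$ in the $L^2$-unit ball gives $\|e^{t_1 A} g\|_{L^2} \le M\|ug\|_{L^1}$.

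For $t \ge 2t_1$, the commutation identities above allow one to write $e^{tA} - P = e^{t_1 A}\bigl(e^{(t-2t_1)A}-P\bigr)\, e^{t_1 A}$; chaining the two estimates with the spectral-gap bound yields
\begin{align*}
|(e^{tA}-P)u_0| \le M^2\, e^{-\varepsilon(t-2t_1)} \|u u_0\|_{L^1}\, u
\end{align*}
for every $u_0 \in L^2$. For $u_0 \ge 0$ the inequality $v \ge cu$ gives $\|u u_0\|_{L^1} = \langle u_0, u\rangle \le c^{-1}\langle u_0, v\rangle$, while $Pu_0 = \langle u_0, v\rangle v \ge c\langle u_0, v\rangle u$; combining these,
\begin{align*}
e^{tA} u_0 \ge \Bigl(c - \tfrac{M^2}{c}\,e^{-\varepsilon(t-2t_1)}\Bigr) \langle u_0, v\rangle\, u,
\end{align*}
which is non-negative once $t \ge 2t_1 + \varepsilon^{-1}\log(M^2/c^2)$, a threshold independent of $u_0$.

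The main obstacle is producing a \emph{two-sided} smoothing estimate on $e^{tA} - P$. The one-sided bound from assumption~(2) alone controls $(e^{tA}-P)u_0$ only in terms of $\|u_0\|_{L^2}$, which is insufficient because this norm can be arbitrarily large compared with the Perron-type coefficient $\langle u_0, v\rangle$ — so the remainder cannot be absorbed into $Pu_0 \ge c\langle u_0, v\rangle u$. Self-adjointness is used decisively to derive the dual estimate $\|e^{t_1 A} g\|_{L^2} \le M\|ug\|_{L^1}$, which replaces the right-hand side by $\|u u_0\|_{L^1}$ and, via $v \ge cu$, by $\langle u_0, v\rangle$; this is what makes the final estimate uniform in $u_0$.
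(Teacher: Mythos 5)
The paper itself does not contain a proof of this theorem; it is a survey and points to two proofs in the literature -- one based on Hilbert--Schmidt operators and one based on duality. Your argument is, in substance, the duality-based route, and the skeleton (normalize $\spb(A)=0$, closed graph theorem on the principal ideal $E_u$, dualize via self-adjointness, factor $e^{tA}-P=e^{t_1 A}\bigl(e^{(t-2t_1)A}-P\bigr)e^{t_1 A}$, and compare against $Pu_0\ge c\langle u_0,v\rangle u$) is correct and cleanly executed. There is, however, one genuine gap you cannot wave away with the parenthetical ``which I shall take to be (as is tacitly intended) isolated.''

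Isolation of $\spb(A)$ is \emph{not} among the stated hypotheses, and it is indispensable for your spectral-gap estimate $\norm{e^{tA}-P}\le e^{-\varepsilon t}$: if $\sigma(A)$ accumulated at $\spb(A)$ from below, then (after normalization) one would have $\norm{e^{tA}-P}=\norm{e^{tA}(\id-P)}=1$ for all $t$, so not even operator-norm convergence would hold. You must therefore \emph{derive} the isolation, and your own two estimates do exactly that. Chaining $|e^{t_1 A}f|\le M\norm{f}_{L^2}\,u$ with $\norm{e^{t_1 A}g}_{L^2}\le M\norm{ug}_{L^1}$ yields
\begin{align*}
    \bigl|e^{2t_1 A}f\bigr| \le M^2\,\norm{uf}_{L^1}\,u \qquad (f\in L^2),
\end{align*}
which says $e^{2t_1 A}$ is order-dominated by the rank-one positive kernel operator $f\mapsto M^2\langle |f|,u\rangle u$ with kernel $M^2\,u(x)u(y)\in L^2(\Omega\times\Omega)$. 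Since the kernel operators form a band in the regular operators on $L^2$, $e^{2t_1 A}$ is itself a kernel operator with kernel dominated pointwise by $M^2u(x)u(y)$, hence square-integrable; thus $e^{2t_1 A}$ is Hilbert--Schmidt, in particular compact. Being also self-adjoint and injective (the spectral integrand $\lambda\mapsto e^{2t_1\lambda}$ is strictly positive), it has discrete spectrum, and via the spectral theorem $\sigma(A)$ consists of isolated eigenvalues with $-\infty$ as the only possible accumulation point -- so $\spb(A)$ is automatically isolated. This is precisely the Hilbert--Schmidt ingredient the survey associates with the other cited proof; inserting this short deduction closes the gap and makes your proof self-contained. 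No other step needs repair.
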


The really interesting part in the conclusion of the theorem is the word \emph{uniformly}, and this is more involved than the argument presented in Example~\ref{exa:fourth-order-pde-1d}. 
Two different proofs of the theorem are known: 
the first one is based on an eigenvalue estimate and the theory of Hilbert--Schmidt operators \cite[Theorem~10.2.1]{Glueck2016} (the assumptions in the reference are slightly different, but the same argument works under the assumptions presented above); the second one employs a duality argument and can thus be generalised to non-self-adjoint semigroups on more general spaces \cite[Theorem~3.3 and Corollary~3.5]{DanersGlueck2018b}. 
This reference also shows that the theorem can be adjusted to even yield a characterisation of a stronger type of eventual positivity.

Theorem~\ref{thm:uniform-self-adjoint} implies the non-trivial observation that the semigroup in Example~\ref{exa:fourth-order-pde-1d} is even uniformly eventually positive.

\subsection*{Spectral properties}

Positive semigroups are known to have surprising structural properties, in particular with regard to their spectrum. 
For various of these properties it can be shown that they are shared by eventually positive semigroups, though some of the proofs are different from the classical proofs for the positive case.
Here are two examples:

\begin{itemize}
	\item 
	If the spectrum of the generator $A$ of an individually eventually positive semigroup is non-empty, then it follows that the spectral bound $\spb(A)$ is itself a spectral value \cite[Theorem~7.6]{DanersGlueckKennedy2016a}.
	
	\item
	For uniformly eventually positive semigroups on $L^p$-spa\-ces, the spectral bound $\spb(A)$ coincides with the so-called \emph{growth bound} of the semigroup (see e.g.\ \cite[Definition~I.5.6]{EngelNagel2000} for a definition); 
	this was recently shown by Vogt \cite[Theorem~2]{VogtPreprint1}.
	The same can be shown, even for individually eventually positive semigroups, on spaces of continuous functions \cite[Theorem~4]{AroraGlueckPreprint1}.
\end{itemize}

More results on the spectrum of eventually positive $C_0$-semigroups can be found in \cite{AroraGlueck2021}.

\section{More examples}

\subsection*{The biharmonic heat equation}

Example~\ref{exa:fourth-order-pde-1d} can be adjusted in the following way: we replace the unit interval with a ball $B$ in $\bbR^d$, the fourth derivative with the square $\Delta^2$ of the Laplace operator and the periodic boundary conditions with so-called \emph{clamped plate} boundary conditions, which require both the function and its normal derivative to vanish at the boundary. 
On $L^2(B)$ this yields the operator $A$ given by
\begin{align*}
	\dom(A) & = H^4(B) \cap H^2_0(B), \\ 
	  Av    & = - \Delta^2 v.
\end{align*}
where $H^4(B)$ and $H^2_0(B)$ denote Sobolev spaces. 
The operator $A$ is self-adjoint and has negative spectral bound. 
It thus generates a $C_0$-semigroup $(e^{tA})_{t \in [0,\infty)}$ which describes the solutions to the so-called \emph{bi-harmonic heat equation}
\begin{align*}
	\dot u(t) = A u(t) 
	\quad \text{for } t \in [0,\infty).
\end{align*}
One has the following result:

\begin{theorem}
	\label{thm:bi-harmonic-clamped-plate}
	The bi-harmonic heat semigroup $(e^{tA})_{t \in [0,\infty)}$ on $L^2(B)$ is uniformly eventually positive.
\end{theorem}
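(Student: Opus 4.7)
The plan is to verify the hypotheses of Theorem~\ref{thm:uniform-self-adjoint}. The operator $A$ is self-adjoint, and it has compact resolvent because the embedding $H^4(B) \cap H^2_0(B) \hookrightarrow L^2(B)$ is compact; it also preserves real-valued functions. It therefore remains to choose a strictly positive reference function $u$ on $B$ and to verify assumptions~(1) and~(2) of Theorem~\ref{thm:uniform-self-adjoint}. The natural candidate is
\begin{align*}
u(x) \coloneqq \dist(x,\partial B)^2,
\end{align*}
which is strictly positive on the interior of $B$ and reflects the rate at which functions in $\dom(A)$ must vanish at $\partial B$, due to the clamped plate conditions $w|_{\partial B} = 0 = (\partial_\nu w)|_{\partial B}$ enforced by $H^2_0(B)$.

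For assumption~(1), the starting point is Boggio's classical formula, which gives the Green's function $G(x,y)$ of $\Delta^2$ under clamped plate boundary conditions on a ball in closed form and shows that $G > 0$. Hence $(-A)^{-1}$ is a compact, self-adjoint, positive operator on $L^2(B)$, and a Krein--Rutman argument yields that its spectral radius $-1/\spb(A)$ is an algebraically simple eigenvalue with a strictly positive eigenfunction. Equivalently, $\spb(A)$ is a simple eigenvalue of $A$ whose eigenspace is spanned by some $v$ that is strictly positive in the interior. Writing $v = -\spb(A) \int_B G(\argument,y)\, v(y) \dx y$ and invoking the explicit asymptotics of Boggio's kernel, $G(x,y) \sim c_y \dist(x,\partial B)^2$ as $x \to \partial B$ with $c_y$ uniformly positive on compact subsets of the interior, one obtains a pointwise lower bound of the form $v \ge c\, u$ for some $c > 0$.

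For assumption~(2), self-adjointness together with the compact resolvent implies $e^{tA} L^2(B) \subseteq \bigcap_{k \ge 0} \dom(A^k)$ for every $t > 0$. Choosing $k$ so large that Sobolev embedding yields $\dom(A^k) \hookrightarrow C^2(\overline{B})$, every $w \in e^{tA} L^2(B)$ is of class $C^2$ up to the boundary and still satisfies $w|_{\partial B} = 0 = (\partial_\nu w)|_{\partial B}$. A Taylor expansion normal to $\partial B$ then produces a pointwise bound $|w(x)| \le C \, \|w\|_{C^2(\overline{B})}\, u(x)$; combined with continuity of $e^{tA} \colon L^2(B) \to \dom(A^k)$ (closed graph theorem), this gives, for any fixed $t_1 > 0$, a constant $C_{t_1}$ with $|e^{t_1 A} u_0| \le C_{t_1} \|u_0\|_{L^2}\, u$ for all $u_0 \in L^2(B)$.

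The step I expect to cost the most work is the lower bound $v \ge c u$ in~(1): bare interior positivity of the Boggio kernel only delivers $v > 0$ on the interior, and the precise quadratic boundary rate genuinely depends on the explicit structure of the Green's function on the ball. This is also why the theorem is restricted to balls --- on a general smooth bounded domain the clamped plate Green's function may change sign, and the Krein--Rutman step fails at the very outset.
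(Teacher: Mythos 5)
Your proposal follows essentially the same route as the paper's own (rough) proof: apply Theorem~\ref{thm:uniform-self-adjoint} with $u = \dist(\argument,\partial B)^2$, use Boggio's explicit Green function on the ball to get positivity-improvement of $(-A)^{-1}$ and a Krein--Rutman argument for the leading eigenfunction, and use smoothing plus the clamped plate conditions to verify assumption~(2). You fill in more of the technical detail (the quadratic boundary asymptotics of the Green function and the Sobolev/Taylor argument) than the paper's outline does, but the strategy is the same.
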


\begin{proof}[Rough outline of the proof]
	Since $B$ is a ball, the inverse operator $(-A)^{-1}$ -- or rather its integral kernel, the so-called \emph{Green function} of $A$ -- can be computed explicitly, and this was in fact done by Boggio over a hundred years ago \cite{Boggio1905} (see also \cite[Section~2]{GrunauSweers1998}).
	The explicit formula shows that $(-A)^{-1}$ maps positive functions to positive functions, and even strengthens their positivity in an appropriate sense. 
	Hence, by a Krein--Rutman type result, one obtains that the leading eigenfunction of $A$ is strictly positive inside $B$.
	Given the specific boundary conditions, it is not too surprising that one also gets that assumptions~(1) and~(2) of Theorem~\ref{thm:uniform-self-adjoint} are satisfied if one chooses $u = d^2$, where $d: B \to [0,\infty)$ describes the distance of each point in $B$ to the boundary $\partial B$.
	Hence, Theorem~\ref{thm:uniform-self-adjoint} gives the desired eventual positivity.
\end{proof}

For more details we refer to \cite[second subsection of Section~6]{DanersGlueckKennedy2016b} and \cite[third subsection of Section~4]{DanersGlueck2018b}. 
A few comments are in order.

\begin{remark}
	\label{rem:biharmonic}
	(a) 
	The argument sketched above breaks down for general domains in $\bbR^d$, since the inverse $(-A)^{-1}$ need no longer be positive in this case. 
	This is a very well-studied topic in PDE theory; see for instance the surveys \cite{Sweers2001} by Sweers and \cite{DallAcquaSweers2004} by Dall'Acqua and Sweers for more information.
	
	(b) 
	However, if one replaces $B$ with a domain which is sufficiently close to a ball, one still obtains the same result.
	The main point here is that positivity of $(-A)^{-1}$ or, under slightly larger perturbations, at least positivity of the leading eigenfunction of $A$ remains true on such domains as shown by Grunau and Sweers in \cite[Theorem~5.2]{GrunauSweers1998}. 
	So Theorem~\ref{thm:bi-harmonic-clamped-plate} holds on this more general class of domains, too.
	
	(c) 
	Theorem~\ref{thm:bi-harmonic-clamped-plate} remains true on general $L^p$-spaces rather than on $L^2$; see for instance \cite[Theorem~4.4]{DanersGlueck2018b}.
	
	(d) 
	If one replaces the clamped plate boundary conditions with so-called \emph{hinged} boundary conditions, which require $u = \Delta u = 0$ on the boundary, the situation becomes much easier because the operator can then be written as minus the square of the Dirichlet Laplace operator. 
	In this case, one has eventual positivity of the semigroup on general domains; on the space of continuous functions, this example is worked out in \cite[Theorem~6.1]{DanersGlueckKennedy2016b}.
\end{remark}

\subsection*{Non-local boundary conditions}

Let us go back to the unit interval and consider the Laplace operator, i.e.\ the second spatial derivative, now.
If we impose local boundary conditions -- such as for instance Dirichlet, Neumann or mixed Dirichlet and Neumann boundary condition, the Laplace operator is well-known to generate a positive semigroup (also on general domains in arbitrary dimension); 
see for instance \cite[Corollary~4.3]{Ouhabaz2005}. 
However, let us now consider an example of non-local boundary conditions instead. 
More specifically, we consider the operator $A$ on $L^2(0,1)$ given by
\begin{align*}
	\dom(A) &= \left\{v \in H^2(0,1): \, v'(0) = -v'(1) = v(0) + v(1) \right\}, \\
	  Av  &= v''.
\end{align*}
This is a self-adjoint operator; the operator, and in particular its relation to the Dirichlet and the Neumann Laplace operator, is discussed in more detail in \cite[Section~3]{Akhlil2018}. 
The spectral bound of $A$ is negative and the inverse $(-A)^{-1}$ can be computed explicitly \cite[proof of Theorem~6.11(i)]{DanersGlueckKennedy2016b}; 
from this formula and the spectral theory of positive operators one can conclude that $\spb(A)$ is a simple eigenvalue and that there is a corresponding eigenfunction which is strictly positive on the closed interval $[0,1]$; see \cite[Theorem~6.11]{DanersGlueckKennedy2016b} for details.
Moreover, one has $e^{1 \cdot A} L^2(0,1) \subseteq \dom(A) \subseteq L^\infty(0,1)$, so the assumptions of Theorem~\ref{thm:uniform-self-adjoint} are satisfied for $u = \one$ and one obtains the following result:

\begin{theorem}
	The semigroup $(e^{tA})_{t \in [0,\infty)}$ on $L^2(0,1)$ generated by the Laplace operator with the nonlocal boundary conditions given above, is uniformly eventually positive.
\end{theorem}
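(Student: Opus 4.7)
The plan is to verify the two hypotheses of Theorem~\ref{thm:uniform-self-adjoint} directly with the choice $u = \one$ (the constant function $1$ on $(0,1)$), which is strictly positive almost everywhere, and then invoke that theorem. As a preliminary observation, since $A$ is self-adjoint and has a real representation matrix in the Fourier sense, the semigroup $(e^{tA})_{t \ge 0}$ is self-adjoint and leaves the real-valued functions invariant; this matches the general setting of Theorem~\ref{thm:uniform-self-adjoint}.

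To establish assumption~(1), I would appeal to the Krein--Rutman-type analysis already worked out in \cite[Theorem~6.11]{DanersGlueckKennedy2016b}: because $\spb(A) < 0$, the resolvent $(-A)^{-1}$ exists and is represented by the explicit, pointwise positive integral kernel computed in that reference; the spectral theory of positive compact operators then guarantees that $\spb(A)$ is a simple eigenvalue of $A$ with a corresponding eigenfunction $v$ which is strictly positive on the \emph{closed} interval $[0,1]$. Since $v \in \dom(A) \subseteq H^2(0,1) \hookrightarrow C[0,1]$ in one spatial dimension, $v$ is continuous on the compact set $[0,1]$ and thus attains a strictly positive minimum $c > 0$. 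This yields the domination $v \ge c \one = cu$ required by~(1).

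To establish assumption~(2), I would set $t_1 = 1$. Self-adjointness of $A$ together with $\spb(A) < \infty$ makes $(e^{tA})_{t \ge 0}$ an analytic semigroup, so $e^{1 \cdot A} L^2(0,1) \subseteq \dom(A)$; since $\dom(A) \subseteq H^2(0,1) \hookrightarrow L^\infty(0,1)$, as already noted in the paragraph preceding the theorem, every $f \in e^{1 \cdot A} L^2(0,1)$ satisfies $\modulus{f} \le \norm{f}_{L^\infty} \one$. Hence the modulus of each element of $e^{t_1 A} L^2(0,1)$ is dominated by a multiple of $u = \one$, as required.

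The genuinely delicate point is the first one: strict positivity (on the \emph{closed} interval, not merely almost everywhere) and simplicity of the leading eigenfunction. This is where the nonlocal character of the boundary conditions could in principle cause trouble, and it is the step that relies on the explicit Green function computation in \cite[Theorem~6.11]{DanersGlueckKennedy2016b}. Everything else reduces to routine self-adjointness and a one-dimensional Sobolev embedding, after which Theorem~\ref{thm:uniform-self-adjoint} delivers uniform eventual positivity of the semigroup.
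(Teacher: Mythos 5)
Your proposal is correct and follows essentially the same route as the paper: verify the two hypotheses of Theorem~\ref{thm:uniform-self-adjoint} with $u = \one$, invoking the explicit Green function computation from \cite[Theorem~6.11]{DanersGlueckKennedy2016b} to obtain a simple leading eigenvalue with eigenfunction strictly positive on the closed interval, and using $e^{1\cdot A}L^2 \subseteq \dom(A) \subseteq H^2(0,1) \hookrightarrow L^\infty(0,1)$ for the domination condition. The only slight imprecision is the remark that $A$ ``has a real representation matrix in the Fourier sense'': the nonlocal boundary conditions are not periodic, so Fourier modes do not diagonalize $A$; the correct (and simpler) justification is that $A$ is a real differential expression with real boundary constraints, hence a real operator, and its self-adjointness is established directly (see \cite[Section~3]{Akhlil2018}). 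This does not affect the validity of the argument.
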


Compare also \cite[Section~4.2]{AroraChillDjidaPreprint1} for a related discussion.
An example of eventual positivity for different nonlocal boundary conditions which lead to a non-self-adjoint realisation of the Laplace operator can be found in \cite[Theorem~4.3]{DanersGlueck2018b}.

\subsection*{Further examples}

Today, eventually positivity, and closely related properties as for instance \emph{asymptotic positivity}, are known for various further $C_0$-semigroups, including the semigroup generated by the Dirich\-let-to-Neumann operator on the unit circle for various parameter choices \cite{Daners2014} (which was the initial motivation for the development of the general theory), several delay differential equations (\cite[Section~6.5]{DanersGlueckKennedy2016a}, \cite[Section~11.6]{Glueck2016} and \cite[Theorem~4.6]{DanersGlueck2018b}), the semigroup generated by a Bi-Laplacian with certain Wentzell boundary conditions \cite[Section~7]{DenkKunzePloss2021}, various semigroups on metric graphs (\cite[Proposition~3.7]{GregorioMugnolo2020a}, \cite[Section~6]{GregorioMugnolo2020b} and \cite[Proposition~5.5]{BeckerGregorioMugnolo2021}) and semigroups generated by Laplacians coupled by point interactions \cite[Proposition~2]{HusseinMugnolo2020}.

\section{Unbounded domains and local properties}
\label{section:local-unbounded}

\subsection*{The biharmonic heat equations on unbounded domains}

A major drawback of Theorem~\ref{thm:uniform-self-adjoint} is that it can only be applied if the leading spectral value is even an eigenvalue of the operator $A$. 
This makes it impossible to apply the theorem to various differential operators that live on unbounded domains.
For instance, consider the biharmonic operator $A$ on $L^2(\bbR^d)$ given by
\begin{align*}
	\dom(A) &= H^4(\bbR^d), \\ 
	  Av  &= -\Delta^2 v.
\end{align*}
The spectrum of $A$ -- which is the set $(-\infty,0]$ -- does not contain eigenvalues, so Theorem~\ref{thm:uniform-self-adjoint} cannot be applied. 
Still, the semigroup $(e^{tA})_{t \in [0,\infty)}$ exhibits a certain local eventual positivity property: for every compact set $K \subseteq \bbR^d$ and every initial value $0 \le u_0 \in L^2(\bbR^d) \cap L^1(\bbR^d)$, the solution $u: t \mapsto e^{tA} u_0$ to the biharmonic heat equation $\dot u(t) = A u(t)$ becomes eventually positive on $K$.
This was proved, under slightly different assumptions on $u_0$, in \cite[Theorem~1(i)]{GazzolaGrunau2008} and \cite[Theorem~1.1(ii)]{FerreroGazzolaGrunau2008} by explicit kernel estimates; 
for more general powers of $\Delta$, a similar result was recently shown in \cite[Theorem~1.1]{FerreiraFerreira2019}. 
Under the assumptions described above, the result was proved by Fourier transform methods in \cite[Theorem~2.1]{DanersGlueckMuiPreprint1}. 

If one replaces the whole space $\bbR^d$ with an infinite cylinder -- for instance of the form $\bbR \times B$, where $B \subseteq \bbR^{d-1}$ is a ball --, and again imposes clamped plate boundary conditions, the same local eventual positivity result remains true. 
The proof is technically more involved, though, and relies on a detailed analysis of the specific partial differential equation under consideration; see \cite[Theorem~2.3 and Section~4]{DanersGlueckMuiPreprint1}.

However, despite the successful analysis of the aforementioned concrete differential equations, an abstract and general theory as outlined in Section~\ref{section:ev-pos-theory} for operators with leading eigenvalue is not yet in sight for the case without eigenvalues:

\begin{open_problem}
	Develop a theory of locally eventually positive $C_0$-semigroups $(e^{tA})_{t \in [0,\infty)}$ which is applicable in situations where the generator $A$ does not have a leading eigenvalue.
\end{open_problem}

\subsection*{Eigenvalues revisited}

Getting back to operators which do have a leading eigenvalue, results such as Theorem~\ref{thm:uniform-self-adjoint} might still not be applicable in some cases due to the conditions~(1) and~(2) which are sometimes particularly subtle at the boundary of $\Omega$ (if $\Omega$ is, say, a domain in $\bbR^d$ and $A$ is a differential operator). 
When all functions are restricted to compact subsets of $\Omega$, though, conditions of the type~(1) and~(2) might still be satisfied. 

This motivates the development of a theory of locally eventually positive semigroups for generators that do have a leading eigenvalue with strictly positive eigenfunction. 
Such a theory was presented by Arora in \cite{AroraToAppear}. 
An application of the theory to certain fourth order operators with unbounded coefficients on $\bbR^d$ (which sometimes have eigenvalues due to the growth of the coefficients) was given in \cite[Section~3.2]{AddonaGregorioRhandiTacelliPreprint1}.

\section{Related topics and results}
\label{section:related}

We close the article by discussing a few related concepts.

\subsection*{Perturbation theory}

If $A$ generates a positive $C_0$-semigroup on a function space $E$, it is quite easy to see that if $B$ is a positive and bounded linear operator on $E$ and $M$ is a bounded and real-valued multiplication operator on $E$, then the perturbed semigroup $(e^{t(A+B+M)})_{t \in [0,\infty)}$ is positive, too:
if $M=0$, this follows for instance from the so-called \emph{Dyson--Phillips series representation} of perturbed semigroups \cite[Theorem~III.1.10]{EngelNagel2000}, and if $M$ is non-zero, it follows from the previous case by using the formula
\begin{align*}
	e^{t(A+B+M)} = e^{-tc} e^{t(A+B+M+c\id)}
\end{align*}
for a real number $c \ge 0$ that is sufficiently large to ensure that $M+c\id$ is positive.

For eventual positivity, though, the situation is much more subtle. 
Under quite general conditions one can show that eventual positivity of a semigroup cannot be preserved by all positive perturbations of the generator.
This was proved in \cite[Theorem~2.3]{DanersGlueck2018a}; related results in finite dimensions had earlier been obtained in \cite[Theorem~3.5 and Proposition~3.6]{ShakeriAlizadeh2017}.
On the other hand, sufficiently small positive perturbations can be shown not to destroy eventual positivity under appropriate assumptions \cite[Section~4]{DanersGlueck2018a}.

\subsection*{Maximum and anti-maximum principles}

One abstract way to formulate that a linear operator $A: E \supseteq \dom(A) \to E$ on a function space $E$ satisfies a \emph{maximum principle} is to require that $(-A)^{-1}$ is a positive operator, i.e.\ maps positive functions to positive functions. 
If $0$ is in the spectrum of $E$, or more generally if the spectral bound of $A$ satisfies $\spb(A) \ge 0$, it is often more natural to consider the \emph{resolvent} $(\lambda \id - A)^{-1}$ for real numbers $\lambda > \spb(A)$.
If the resolvent at one such point $\lambda_0$ is positive, then the same is true for all $\lambda \in (\spb(A), \lambda_0)$, too, and we say that $A$ satisfies a \emph{maximum principle}.
More precisely, this is a \emph{uniform} maximum principle, while we say that $A$ satisfies an \emph{individual maximum principle} if for each $0 \le f \in E$ there exists an ($f$-dependent) number $\lambda_0 > \spb(A)$ such that $(\lambda \id - A)^{-1}f \ge 0$ for all $\lambda \in (\spb(A), \lambda_0)$.

Similarly, it is common to say that $A$ satisfies a \emph{uniform anti-maximum principle} if $\spb(A)$ is, say, an isolated spectral value and for all $\lambda$ in a left neighbourhood of $\spb(A)$ the resolvent $(\lambda \id - A)^{-1}$ maps positive functions to negative functions. 
Likewise one can define an \emph{individual anti-maximum principle} (and clearly, the same concepts can be defined at isolated spectral values different from $\spb(A)$, too). 

Anti-maximum principles have a considerable history and have, for instance, been studied for various elliptic differential operators; see e.g.\ \cite{ClementPeletier1979} for a seminal paper on this topic. 
For biharmonic and polyharmonic operators the validity of (anti-)maximum principles is closely related to the boundary conditions and the geometry of the underlying domain, as explained in Remark~\ref{rem:biharmonic}.

The argument sketched after Theorem~\ref{thm:bi-harmonic-clamped-plate} can be generalised (and partially reversed) in order to obtain a correspondence between the following three types of properties:

\begin{enumerate}[(a)]
	\item 
	Eventual positivity of the semigroup $(e^{tA})_{t \in [0,\infty)}$.
	
	\item 
	Spectral properties of $A$ and positivity of the leading eigenfunction.
	
	\item 
	An individual (anti-)maximum principle for $A$.
\end{enumerate}

This correspondence was discussed in \cite[Sections~3--5]{DanersGlueckKennedy2016b}, where the terminology \emph{eventual positivity and negativity of the resolvent} was used to describe maximum and anti-maxi\-mum principles.
Indeed, equivalence between the three properties (a)--(c) is true under a number of technical restrictions which have been analysed in more detail in \cite{DanersGlueck2017}.

Uniform (anti-)maximum principles are more difficult to analyse than their individual counterparts -- a phenomenon that occurs, as pointed out above, for semigroups, too, but becomes even more pronounced when studying (anti-)maximum principles. 
An abstract operator theoretic approach to uniform anti-maximum principles was first presented by Takáč in \cite[Section~5]{Takac1996}, and recent progress on the topic was made in \cite{AroraGlueck2022}. 
As a sample result, let us discuss the following special case of \cite[Corollary~5.4]{AroraGlueck2022} for self-adjoint operators on $L^2$:

\begin{theorem}
	\label{thm:anti-max}
	Let $(\Omega,\mu)$ be a $\sigma$-finite measure space, let $A: L^2 \supseteq \dom(A) \to L^2$ be a real and self-adjoint operator on $L^2 := L^2(\Omega,\mu)$.
	Let $u \in L^2$ be a function which is $> 0$ almost everywhere and assume that there exists an integer $m \ge 0$ such that every vector in $\dom(A^m)$ is dominated in modulus by a multiple of $u$. 
	Assume moreover that $\lambda_0 \in \bbR$ is an isolated spectral value of $A$ and a simple eigenvalue whose eigenspace contains a function $v$ that satisfies $v \ge cu$ for a number $c > 0$.
	
	If $\mu_1 > \lambda_0$ is in the resolvent set of $A$ and $(\mu_1 \id - A)^{-1} \ge 0$, then the following assertions are equivalent:
	\begin{enumerate}[\upshape (i)]
		\item 
		One has $(\mu \id - A)^{-1} \le 0$ for all $\mu$ in a left neighbourhood of $\lambda_0$.
		
		\item 
		There exists a real number $d > 0$ such that 
		\begin{align*}
			(\id \mu_1 - A)^{-1} f  \le d \, \langle f, u \rangle u
			\quad \text{for all } 0 \le f \in L^2.
		\end{align*}
	\end{enumerate}
\end{theorem}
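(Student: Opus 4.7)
The plan is to work with the Laurent expansion of the resolvent at the isolated simple eigenvalue $\lambda_0$. Let $Pf = \langle f, v\rangle v / \|v\|^2$ denote the orthogonal projection onto $\linSpan\{v\}$ and set $S(\mu) := (\mu \id - A)^{-1}(\id - P)$; then
\[
(\mu \id - A)^{-1} = \frac{P}{\mu - \lambda_0} + S(\mu)
\]
in a punctured neighbourhood of $\lambda_0$ inside the resolvent set, and $S$ extends analytically across $\lambda_0$. Since $v$ lies in $\dom(A^k)$ for every $k$, the domination hypothesis forces $v \leq K u$ for some $K > 0$; combined with $v \geq cu$, this sandwiches, for any $f \geq 0$,
\[
\frac{c^2 \langle f, u\rangle}{\|v\|^2}\, u \;\leq\; Pf \;\leq\; \frac{K^2 \langle f, u\rangle}{\|v\|^2}\, u.
\]

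The technical core is a pointwise domination lemma of the form $|S(\mu) f| \leq C \langle |f|, u\rangle u$ for some constant $C$ and all $\mu$ in some neighbourhood $U$ of $\lambda_0$ and all $f \in L^2$. Starting from (ii), positivity of $(\mu_1 \id - A)^{-1}$ first upgrades (ii) to the bilateral bound $|(\mu_1 \id - A)^{-1} f| \leq d \langle |f|, u\rangle u$ by splitting $f = f_+ - f_-$. Testing (ii) at $f = u$ gives $(\mu_1 \id - A)^{-1} u \leq d \|u\|^2 u$, so iterating yields $|(\mu_1 \id - A)^{-k} f| \leq d^k \|u\|^{2(k-1)} \langle |f|, u\rangle u$; inserting this into the Neumann series
\[
(\mu \id - A)^{-1} = \sum_{k \geq 0} (\mu_1 - \mu)^k (\mu_1 \id - A)^{-(k+1)}
\]
produces the pointwise domination on a small disc around $\mu_1$. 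The $\dom(A^m)$-hypothesis then propagates this bound analytically along a finite chain of overlapping discs reaching a neighbourhood of $\lambda_0$: iterated resolvents $(\mu_1 \id - A)^{-m}$ land in $\dom(A^m)$ and so are automatically dominated by $u$, which prevents the constants from blowing up as one approaches $\lambda_0$.

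With the domination lemma established, the implication (ii) $\Rightarrow$ (i) is immediate: for $f \geq 0$ and $\mu \in U$ with $\mu < \lambda_0$,
\[
(\mu \id - A)^{-1} f \;\leq\; \frac{c^2 \langle f, u\rangle}{(\mu - \lambda_0)\|v\|^2}\, u + C \langle f, u\rangle\, u,
\]
and the right-hand side is $\leq 0$ as soon as $\lambda_0 - \mu < c^2/(C\|v\|^2)$. For (i) $\Rightarrow$ (ii), fix any $\mu_0$ in the left neighbourhood where the resolvent is negative; the identity $S(\mu_0) f = (\mu_0 \id - A)^{-1} f + (\lambda_0 - \mu_0)^{-1} P f$ together with (i) and the upper bound on $Pf$ yields $S(\mu_0) f \leq C' \langle f, u\rangle u$ for $f \geq 0$. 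Transporting this one-sided pointwise bound from $\mu_0$ across $\lambda_0$ to $\mu_1$ by the same chain-of-discs argument and then reassembling $(\mu_1 \id - A)^{-1} f = Pf/(\mu_1 - \lambda_0) + S(\mu_1) f$ delivers (ii).

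The main obstacle is the middle paragraph: lifting operator-norm analyticity of $S$ to the pointwise (almost-everywhere) estimate $|S(\mu) f| \leq C \langle |f|, u\rangle u$ on a full neighbourhood of $\lambda_0$. This is precisely where the hypothesis on $\dom(A^m)$ is essential — iterated resolvents $(\mu_1 \id - A)^{-m}$ map into $\dom(A^m)$ and are therefore dominated by $u$, whereas a mere $L^2$-operator bound on $S(\mu)$ would be far too weak to offset the simple pole at $\lambda_0$ against any fixed multiple of $u$.
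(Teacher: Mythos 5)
The paper does not actually prove this theorem; it is stated as a special case of a result from the literature (Corollary~5.4 of the cited Arora--Gl\"uck article), so there is no in-text proof to compare against. Judged on its own merits, the skeleton of your proposal is sound: the Laurent decomposition $R(\mu)=\tfrac{P}{\mu-\lambda_0}+S(\mu)$, the two-sided sandwich $\tfrac{c^2}{\|v\|^2}\langle f,u\rangle u\le Pf\le \tfrac{K^2}{\|v\|^2}\langle f,u\rangle u$ (with $v\le Ku$ correctly extracted from $v\in\dom(A^m)$), and the reduction of both implications to a uniform pointwise estimate of the form $|S(\mu)f|\le C\,\langle|f|,u\rangle\,u$ on a two-sided neighbourhood of $\lambda_0$ are exactly the right reductions, and the final reassembly once that estimate is granted is correct.

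The gap is precisely the ``chain of overlapping discs'' step, and it is a real one. From (ii) you get $|R(\mu_1)^{k}f|\le d^{k}\|u\|^{2(k-1)}\langle|f|,u\rangle u$, so the pointwise Neumann series is controlled only on $|\mu-\mu_1|<1/(d\|u\|^2)$, with constant $d/\bigl(1-|\mu-\mu_1|d\|u\|^2\bigr)$. Re-centring at a point $\mu_2$ at distance $\theta/(d\|u\|^2)$ from $\mu_1$ inflates the constant to $d/(1-\theta)$, hence shrinks the next admissible radius to $(1-\theta)/(d\|u\|^2)$; summing the resulting geometric series shows the chain's total reach is exactly $1/(d\|u\|^2)$, i.e.\ it never escapes the first disc, which need not contain $\lambda_0$. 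The role you assign to the $\dom(A^m)$ hypothesis does not repair this: via the closed graph theorem it gives a bound $|R(\mu_1)^{m}f|\le M\,\|f\|_{2}\,u$, which is in terms of $\|f\|_2$ rather than $\langle|f|,u\rangle$, and feeding this into the remainder $(\mu_1-\mu)^m R(\mu_1)^m R(\mu)(\id-P)$ only yields $|S(\mu)f|\le C\,\|f\|_2\,u$. That weaker bound is insufficient for the conclusion: in $R(\mu)f\le\bigl[\tfrac{c^2\langle f,u\rangle}{(\mu-\lambda_0)\|v\|^2}+C\|f\|_2\bigr]u$ the sign condition requires $\lambda_0-\mu\lesssim\langle f,u\rangle/\|f\|_2$, and this ratio tends to $0$ along positive $f$ concentrated where $u$ is small, so you would recover only an $f$-dependent (individual) anti-maximum principle, not assertion~(i). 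The middle paragraph therefore needs to be replaced by a genuinely different mechanism for controlling $S(\mu)$ near $\lambda_0$; the disc-chaining argument as described cannot close.
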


The assumption that $A$ be a \emph{real operator} means that the domain $\dom(A)$ is spanned by real-valued functions and that $A$ maps real-valued functions to real-valued functions. 
Assertion~(i) of the theorem is a uniform anti-maximum principle, while assertion~(ii) can be considered as an upper kernel estimate for the resolvent (in other words: as an upper Green function estimate) of $A$.
Simple consequences of this theorem are the classical results that the Dirichlet Laplace operator on an interval does not satisfy a uniform anti-maximum principle, while the Neumann Laplace operator on an interval does (see \cite[Proposition~6.1(a) and~(b)]{AroraGlueck2022} for a few more details). 
More involved examples where the theorem (or more general versions thereof) can be applied are discussed in \cite[Section~6]{AroraGlueck2022}.


\bibliographystyle{plain}
\bibliography{literature}

\end{document}